\providecommand{\customgenericname}{}
\newcommand{\newcustomtheorem}[2]{%
  \newenvironment{#1}[1]
  {%
   \renewcommand\customgenericname{#2}%
   \renewcommand\theinnercustomgeneric{##1}%
   \innercustomgeneric
  }
  {\endinnercustomgeneric}
}
\theoremstyle{plain}
		\newtheorem{theorem}{Theorem}[section]
		\newtheorem{def-thm}{Definition-Theorem}[section]
		\newtheorem{lemma}[theorem]{Lemma}
		\newtheorem{corollary}[theorem]{Corollary}
		\newtheorem{proposition}[theorem]{Proposition}
		\newtheorem*{theorem*}{Theorem}
\theoremstyle{definition}
		\newtheorem{definition}[theorem]{Definition}
		\newtheorem{question}[theorem]{Question}
\theoremstyle{remark}
\newcommand{\A}{{\mathbb{A}}}
\newcommand{\C}{{\mathbb{C}}}
\renewcommand{\P}{{\mathbb{P}}}
\newcommand{\Q}{{\mathbb{Q}}}
\newcommand{\V}{{\mathbb{V}}}
\newcommand{\Z}{{\mathbb{Z}}}
\newcommand{\Lcal}{{\mathcal{L}}}
\newcommand{\Ocal}{{\mathcal{O}}}
\newcommand{\mfrak}{{\mathfrak{m}}}
\newcommand{\pfrak}{{\mathfrak{p}}}
\newcommand{\qfrak}{{\mathfrak{q}}}
\newcommand{\id}{{\textup{id}}}
\DeclareMathOperator{\Aut}{Aut}
\DeclareMathOperator{\End}{End}
\DeclareMathOperator{\Fin}{Fin}
\DeclareMathOperator{\GL}{GL}
\DeclareMathOperator{\Hom}{Hom}
\DeclareMathOperator{\sheafhom}{\mathcal{H}\kern -.5pt \emph{om}}
\DeclareMathOperator{\Img}{Im}
\DeclareMathOperator{\Mat}{Mat}
\DeclareMathOperator{\Pic}{Pic}
\DeclareMathOperator{\PGL}{PGL}
\DeclareMathOperator{\SL}{SL}
\DeclareMathOperator{\Char}{char}
\DeclareMathOperator{\Span}{Span}
\DeclareMathOperator{\Spec}{Spec}
\DeclareMathOperator{\Tor}{Tor}
\DeclareMathOperator{\Per}{Per}
\DeclareMathOperator{\Preper}{Preper}
\title{On periodic orbits of polynomial maps}
\author{Junho Peter Whang}
\address{Department of Mathematical Sciences and RIM,
Seoul National University}
\email{jwhang@snu.ac.kr}
\date{\today}
\begin{document}

\begin{abstract}
We prove the existence of an effective universal upper bound for the order of any integral periodic orbit of any integral algebraic dynamical system in a fixed ambient space. Using this, we demonstrate the decidability of periodicity in arbitrary finitely generated algebraic dynamical systems over fields of characteristic zero.
\end{abstract}

\maketitle


\section{Introduction}\label{sect:1}
\subsection{\unskip}
This paper concerns effective universal bounds for orders of integral periodic orbits in algebraic dynamics. Suppose that $S$ is a collection of endomorphisms of a set $X$, generating a monoid $\langle S\rangle$ under composition. We shall say that a point $x\in X$ is \emph{$S$-periodic} if its $S$-orbit $\Ocal_S(x)=\{f(x):f\in\langle S\rangle\}$ is a finite set and $\langle S\rangle$ acts on $\Ocal_S(x)$ by permutations. For example, if $S$ consists of a single map $f$, then a point $x\in X$ is $S$-periodic if and only if $f^k(x)=x$ for some $k\geq1$. To motivate our main result, we begin with a special case that may be of general interest.

\begin{theorem}
\label{affine}
Fix $n\geq1$. There is an effective universal constant $C(n)$ such that, for an arbitrary set $S$ of polynomial maps from $\Z^n$ to itself with integer coefficients, and any $S$-periodic point $x\in\Z^n$, we have
$$|\Ocal_S(x)|\leq C(n).$$
In particular, there is a polynomial-time algorithm to decide, for $x\in\Z^n$ and a finite set $S$ of polynomial endomorphisms of $\Z^n$ over $\Z$, whether or not $x$ is $S$-periodic.
\end{theorem}

An explicit computation of an upper bound $C(n)$ is given in Section \ref{sect:2.4}. Clearly, the same universal bound $C(n)$ applies to the integral periodic orbits 
for all sets of endomorphisms of any scheme embeddable as a closed subscheme of the affine $n$-space $\A^n$ over $\Z$. Let $B(n)$ denote the minimum value of possible $C(n)$ for which Theorem \ref{affine} holds. Determining the exact value of $B(n)$ for all $n\geq1$ remains an open problem. We remark that $B(m+n)\geq B(m)B(n)$ for all $m,n$, so $B(n)$ has rapid growth as $n\to\infty$. Theorem \ref{affine} is a special case of the following.

\begin{theorem}
\label{mainthm}
    Let $V$ be a separated scheme flat of finite presentation over a finitely presented domain $k$ of characteristic zero. There is an effective universal constant $C(V,k)$ such that, for any subscheme $X$ of $V$, any set $S$ of endomorphisms of $X/k$, and any $S$-periodic point $x\in X(k)$, we have
    $$|\Ocal_S(x)|\leq C(V,k).$$
\end{theorem}

Here, by a subscheme of $V$ we mean a closed subscheme of an open subscheme of $V$. Since an endomorphism of a subscheme of $V$ need not always extend to an endomorphism of $V$, Theorem \ref{mainthm} is stronger than the version that only considers endomorphisms of $V/k$. If $V$ is proper over a number ring $k$, then writing $F$ for the fraction field of $k$ we have $X(k)=X(F)$ for any closed subscheme $X$ of $V$, so Theorem \ref{mainthm} implies universal boundedness of orders of rational periodic orbits of algebraic dynamical systems supported on closed subschemes of $V$ (cf.~paragraph following Question \ref{mainquestion}). Another notable special case of Theorem \ref{mainthm} is where $V$ is the total space of a parametric family of schemes given by a morphism $\pi:V\to B$. Theorem \ref{mainthm} implies in this case that the maximum orders of integral periodic orbits on the fibers $\pi^{-1}(p)$ for $p\in B(k)$ are uniformly bounded.

Theorem \ref{mainthm} appears to be new if the relative dimension of $V/k$ is at least $2$. The case $V=\P^1$ for single maps (i.e.~$|S|=1$) is due to Morton-Silverman \cite{ms}. Of note is the method of reduction modulo primes used therein; see also \cite[Section 2.6]{silverman} and \cite{li, ms, ms2, narkiewicz, pezda, zieve, hutz}. The same method has been used to compute rational torsion subgroups of elliptic curves (See e.g.~\cite[Section VIII7]{silverman2}). Historically, it was used by Minkowski \cite{minkowski} to bound the orders of finite subgroups of $\GL_n(\Z)$, and also used to prove Selberg's lemma (See \cite{serre}). Also using this method, Fakhruddin \cite{fakhruddin} observed the boundedness (without universality) of periods for individual maps on integral points of varieties. Our work relies on the same method of reduction modulo primes, with a slight twist. Instead of analyzing each individual dynamical system, we consider all closed subschemes associated to finite sets of integral points in the ambient space $V$. We then obtain a uniform bound on their automorphism groups, in terms of counts of points on $V$ with values in certain finite rings determined by $(V,k)$. The key ingredient here is an effective form of nonlinear Selberg's lemma due to Bass--Lubotzky \cite{bl}; reductions modulo primes appear in the proof this latter result. Theorem \ref{mainthm} is a consequence of the following.

\begin{theorem}
\label{mainthm'}
    Let $V$ be a separated scheme flat of finite presentation over a finitely presented domain $k$ of characteristic zero. There is an effective universal constant $C(V,k)$ such that, for every finite set $\Ocal\subseteq V(k)$, we have
    $$|\Aut(Z_\Ocal)|\leq C(V,k)$$
    where $Z_\Ocal$ denotes the reduced closed subscheme of $V$ associated to $\Ocal$.
\end{theorem}

Implicit in the effectivity of $C(V,k)$ in Theorems \ref{mainthm} and \ref{mainthm'} is the assumption that $(V,k)$ is explicitly given, in the following sense. First, we assume that $k$ is given as a quotient of a polynomial ring with coefficients in $\Z$, such that a finite set of generators for the kernel of the quotient map is specified. This means that finitely generated subrings of $\C$ such as $\Z[\pi,e]$ would be admissible \emph{modulo} difficult problems in transcendental number theory. As for $V$, we assume that $V$ is given as an explicit finite union of open affine schemes $U_i$ with affine overlaps $U_i\cap U_j$, each of which is the spectrum of an explicitly finitely presented $k$-algebra, such that the gluing morphisms $U_i\cap U_j\to U_i$ are induced by explicitly given morphisms of $k$-algebras. In practice, one may assume that $V$ is defined by an explicit finite collection of polynomials with $k$-coefficients with $\A^n$ or $\P^n$ as ambient space. Theorem \ref{mainthm} implies the following decidability result for periodic points on algebraic varieties (i.e.~reduced separated schemes of finite type over fields).

\begin{theorem}
\label{mainthm3}
Let $S$ be a finite set of endomorphisms of an algebraic variety $V/\bar\Q$. There is an algorithm to decide, given $x\in V(\bar\Q)$, whether or not $x$ is $S$-periodic.
\end{theorem}

As discussed in the preceding paragraph, we may replace $\bar\Q$ by $\C$ or arbitrary fields of characteristic zero modulo considerations in transcendental number theory. In several cases (e.g. if $S$ consists of a polarized endomorphism of a projective variety), stronger results than Theorem \ref{mainthm3} have been available through the theory of heights, starting with Northcott \cite{northcott}; see \cite[Section 4]{survey} for a survey. For example, the method of height functions often entails that the (pre)periodic points of bounded degree over $\Q$ are finite and effectively bounded in height. Note however that such finiteness of rational periodic points does not always hold, even for endomorphisms of the affine space $\A^n$ with $n\geq2$.

In Theorem \ref{mainthm3}, of particular interest is the case where $S$ is a finite symmetric set of generators for a group $G$ of automorphisms of $V$. The periodic points of $V$ in this case are precisely those with finite orbit. There are numerous discrete group actions on varieties whose finite orbits carry special significance. For instance, on moduli spaces of local systems over a fixed Riemann surface $\Sigma$, the points with finite orbit under the mapping class group action correspond to the Fuchsian systems of differential equations on $\Sigma$ with algebraic isomonodromic deformations; see \cite{ch} for details (cf.~\cite{bgmw, ll}). Theorem \ref{mainthm3} shows that, in all examples, the property of having a finite group orbit is a computable invariant of a given point in the variety. In particular, any condition characterizing the periodic points on the variety must be decidable. Theorem \ref{mainthm3} leaves open the following question.

\begin{question}
\label{mainquestion}
Given a finite set $S$ of endomorphisms of an algebraic variety $V/\bar\Q$, is the subset of points with finite $S$-orbit decidable?
\end{question}

More generally, one may ask if the Zariski closure of the $S$-orbit of a point is always computable. Finally, we remark that the results of this paper are in a sense orthogonal to the well-known problem of uniformly bounding the periods of rational points for endomorphisms over $\Q$ of given degree on $\P^n$. More precisely, our work considers the family of sets of arbitrary endomorphisms that have ``good reduction'' modulo a fixed set of primes, whereas the conjectures in \cite{ms, poonen} concern the family of endomorphisms of fixed degree but varying sets of good reduction.

\subsection{Acknowledgements}
I thank Abhishek Oswal, Joseph H.~Silverman, Peter Sarnak, and Daniel Litt for valuable discussions and comments. This work was supported by the Samsung Science and Technology Foundation under Project Number SSTF-BA2201-03.

\section{Proofs of the main results}\label{sect:2}

This section is organized as follows. In Section \ref{sect:2.1}, we record and sketch the proof of a result of Bass-Lubotzky \cite{bl} on automorphism groups of schemes, which we view as a nonlinear analogue of Selberg's lemma. In Section \ref{sect:2.2}, we use the result of Bass-Lubotzky to give a proof of Theorems \ref{mainthm'} and \ref{mainthm}. Then, in Section \ref{sect:2.3}, we establish Theorem \ref{mainthm3} as a corollary of Theorem \ref{mainthm}. We derive Corollary \ref{affine} together with a compuation of an effective bound $C(n)$ in \ref{sect:2.4}.

\subsection{Selberg-Bass-Lubotzky lemma} \label{sect:2.1}
Let $k$ be a finitely presented domain of characteristic zero. Let $Z$ be a scheme flat of finite presentation over $k$. Given a finite set $M$ of closed points of $Z$, let us write
    $$A_{M}=\prod_{x\in M}\Ocal_{Z,x}\quad\text{and}\quad J_{M}=\prod_{x\in X}\mfrak_{Z,x}$$
where $\Ocal_{Z,x}$ denotes the local ring of $Z$ at $x$ and $\mfrak_{Z,x}$ is the maximal ideal of $\Ocal_{Z,x}$, with residue field $\kappa(x)=\Ocal_{Z,x}/\mfrak_{Z,x}$. We shall say that $M$ has \emph{residue characteristic $p$} if $\Char\kappa(x)=p$ for every $x\in M$.

\begin{definition} \cite[p.4]{bl}
Let $M$ be a finite set of closed points in $Z$. We shall say that $M$ is \emph{effective} if there is a finite affine open covering $(U_i)_{i\in I}$ of $Z$ such that the natural morphism
    $\Ocal_Z(U_i)\to\prod_{x\in M\cap U_i}\Ocal_{Z,x}$
    is injective for every $i\in I$.
\end{definition}

\begin{proposition}
\label{prop1}
    Suppose that $M$ is a finite effective set of closed points in $Z$ with residue characteristic $p>0$. Then the order of any torsion element in $$\Gamma_M=\ker(\Aut(Z/k)\to\Aut Z(A_{M}/J_{M}^2))$$ is a power of $p$. In particular, if $M'$ is a finite effective set of closed points in $Z$ with residue characteristic $p'\neq p$, then $\Gamma_M\cap\Gamma_{M'}$ is a torsionfree normal subgroup of finite index in $\Aut(Z/k)$.
\end{proposition}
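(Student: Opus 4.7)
The plan is to reduce the torsion bound in $\Gamma_X$ to a local analysis on the completions of the stalks at points of $X$, then use the effectiveness hypothesis to promote local triviality to global triviality. Let $\phi \in \Gamma_X$ have finite order $n$; writing $n = p^a m$ with $\gcd(m,p) = 1$ and replacing $\phi$ by $\psi := \phi^{p^a} \in \Gamma_X$, I reduce to proving that any torsion $\psi \in \Gamma_X$ whose order $m$ is coprime to $p$ must be the identity. Since $\psi \in \Gamma_X$, it fixes each $x \in X$ pointwise and acts as the identity on $\Ocal_{Z,x}/\mfrak_{Z,x}^2$, so its action extends to a continuous automorphism of each completion $\widehat{\Ocal_{Z,x}}$ that is the identity modulo $\mfrak_{Z,x}^2 \widehat{\Ocal_{Z,x}}$.

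The first key step is to show that if $\psi$ acts trivially on every stalk $\Ocal_{Z,x}$ for $x \in X$, then $\psi = \id_Z$, and this is where effectiveness enters. Since $Z$ is separated, the equalizer of $\psi$ and $\id_Z$ is a closed subscheme $Z^\psi \subseteq Z$ with ideal sheaf $I$; the hypothesis gives $I_x = 0$ for each $x \in X$. For each $U_i$ in the given effective affine cover of $Z$, any section $f \in I(U_i) \subseteq \Ocal_Z(U_i)$ has trivial germ at every $x \in X \cap U_i$, so by the injectivity in the definition of effectiveness, $f = 0$. Hence $I(U_i) = 0$ for every $i$, so $I = 0$ on $Z$, and $\psi = \id_Z$.

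It therefore suffices to rule out nontrivial action of $\psi$ on $\hat R := \widehat{\Ocal_{Z,x}}$ for some single $x \in X$. The ring $\hat R$ is Noetherian local with maximal ideal $\hat\mfrak$, and $\bigcap_{s} \hat\mfrak^s = 0$ by Krull's intersection theorem, so if $\psi \neq \id$ on $\hat R$ there is a largest integer $r \geq 2$ with $\psi \equiv \id \pmod{\hat\mfrak^r}$. Choose generators $t_1, \ldots, t_d$ of $\hat\mfrak$ and write $\psi(t_i) = t_i + h_i$ with each $h_i \in \hat\mfrak^r$ and some $h_i \notin \hat\mfrak^{r+1}$. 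A direct Leibniz-rule computation shows that $\psi - \id$ sends $\hat\mfrak^s$ into $\hat\mfrak^{s+r-1}$, so induction on $n$ yields $\psi^n(t_i) \equiv t_i + n h_i \pmod{\hat\mfrak^{2r-1}}$. Since $r \geq 2$ gives $2r - 1 \geq r + 1$, taking $n = m$ and using $\psi^m = \id$ forces $m h_i \in \hat\mfrak^{r+1}$. But $\Char \kappa(x) = p$ and $\gcd(m, p) = 1$ make multiplication by $m$ invertible on the $\kappa(x)$-vector space $\hat\mfrak^r/\hat\mfrak^{r+1}$, so each $h_i \in \hat\mfrak^{r+1}$, contradicting the choice of $r$.

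For the ``in particular'' part, $\Gamma_X$ and $\Gamma_{X'}$ are normal (as kernels), and of finite index because $A_X/J_X^2$ is a finite ring: by Zariski's lemma (equivalently, the Jacobson property of finitely generated $\Z$-algebras), each $\kappa(x)$ in positive residue characteristic is a finite field, and each $\mfrak_{Z,x}/\mfrak_{Z,x}^2$ is then a finite-dimensional vector space over it. Consequently $\Gamma_X \cap \Gamma_{X'}$ is normal and of finite index in $\Aut(Z/k)$, and any torsion element has order that is simultaneously a power of $p$ and of $p'$, hence equal to $1$. The main obstacle is the Taylor expansion step: keeping careful track of $\hat\mfrak$-adic orders for the iterated map is the one place where genuine work is needed, while the passage from local to global via effectiveness is essentially formal once the fixed subscheme is identified.
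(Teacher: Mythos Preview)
Your argument is correct and takes a genuinely different route from the paper's. The paper shows directly that $f$ has $p$-power order on each Artinian quotient $\Ocal_{Z,x}/\mfrak_{Z,x}^r$: writing $f = 1+g$ with $g$ nilpotent (since $g$ raises $\mfrak$-adic order), one gets $(1+g)^{p^N} = 1 + ph$ for $N\gg 0$, and then $(1+ph)^{p^{r-1}} = 1$ because $\Ocal_{Z,x}/\mfrak_{Z,x}^r$ is a $\Z/p^r\Z$-module. Your approach is the contrapositive ``leading term'' argument, familiar from the proof that principal congruence subgroups of $\GL_n(\Z_p)$ are pro-$p$: assume the order is prime to $p$ and linearize modulo $\hat\mfrak^{r+1}$. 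Both are standard; yours is slightly more conceptual, the paper's slightly more self-contained.

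One point deserves a sentence of justification. You assert that maximality of $r$ forces some $h_i \notin \hat\mfrak^{r+1}$, i.e.\ that $\psi(t_i)\equiv t_i \pmod{\hat\mfrak^{r+1}}$ for all $i$ would imply $\psi\equiv\id\pmod{\hat\mfrak^{r+1}}$. In equal characteristic this is immediate, but here we are in mixed characteristic. The fix is short: the map $\bar\delta = (\psi - \id)\bmod \hat\mfrak^{r+1}: \hat R \to \hat\mfrak^r/\hat\mfrak^{r+1}$ is a $k$-derivation; once $\bar\delta(t_i)=0$ the Leibniz rule gives $\bar\delta(\hat\mfrak)=0$, so $\bar\delta$ factors through $\kappa(x)$, and any derivation from the perfect field $\kappa(x)$ into a $\kappa(x)$-module vanishes. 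You should insert this, since without it the contradiction in your last step does not close. (You also invoke separatedness of $Z$ for the equalizer argument; this is not in the stated hypotheses of the proposition, though it holds in every application in the paper.)
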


\begin{proof}
The proof is given in \cite[pp.4-5]{bl}, at least in the case $k=\Z$. For the sake of completeness, we reproduce the proof here, with minor modifications. First, since the group $\Gamma_M$ fixes $Z(A_M/J_M^2)$, it fixes each point $x\in M$ and hence acts on $\Ocal_{Z,x}$ with trivial action on $\Ocal_{Z,x}/\mfrak_{Z,x}^2$. Suppose that $f\in\Gamma_M$ is a torsion element. By effectivity of $M$, to show that $f$ has $p$-power order it suffices to show that it acts with $p$-power order on $\Ocal_{Z,x}$ for each $x\in M$. Since $\bigcap_{r\geq1}{\mfrak_{Z,x}^r}=0$, it suffices that $f$ acts with $p$-power order on the finite ring $\Ocal_{Z,x}/\mfrak_{Z,x}^r$ for all $r$, with the case $r\leq 2$ being trivially true by our assumption on $\Gamma_M$. Let us now view $f$ as a $\Z$-linear endomorphism of $\Ocal_{Z,x}/\mfrak_{Z,x}^r$ with $r\geq3$. Note that $g=f-1$ is a nilpotent endomorphism of $\Ocal_{Z,x}/\mfrak_{Z,x}^r$. Indeed, note that $(f-1)(\mfrak_{Z,x}^{s})\subseteq\mfrak_{Z,x}^{s+1}$ for every $s\geq2$. Choosing $N\gg0$ so that $g^{p^N}=0$, we have
$$
f^{p^{N+r-1}}=((1+g)^{p^N})^{p^{r-1}}=\left(1+ph\right)^{p^{r-1}}=1
$$
for some $h\in\End_{\Z}(\Ocal_{Z,x}/\mfrak_{Z,x}^r)$, where the last equality follows from the fact that $\Ocal_{Z,x}/\mfrak_{Z,x}^r$ is a $\Z/p^r\Z$-module. This proves the desired result.
\end{proof}

\subsection{Configurations of integral points}\label{sect:2.2}
We now prove Theorems \ref{mainthm'} and \ref{mainthm}.

\begin{thm}{\ref{mainthm'}}
    Let $V$ be a separated scheme flat of finite presentation over a finitely presented domain $k$ of characteristic zero. There is an effective universal constant $C(V,k)$ such that, for every finite set $\Ocal\subseteq V(k)$, we have
    $$|\Aut(Z_\Ocal)|\leq C(V,k)$$
    where $Z_\Ocal$ denotes the reduced closed subscheme of $V$ associated to $\Ocal$.
\end{thm}

\begin{proof}
Let $\Ocal\subseteq V(k)$ be a finite set. We view $y\in \Ocal$ as a section $y:\Spec k\to V$, and let $Z_\Ocal=\bigcup_{y\in O}y(\Spec k)$ be the union of the images of $y\in O$, endowed with the structure of a reduced closed subscheme of $V$. Here, separatedness of $V/k$ ensures that $y(\Spec k)\subseteq V$ is closed for $y\in \Ocal$. Note that $\Aut(Z_\Ocal/k)$ is finite.

Given any maximal prime $\qfrak$ of $k$, let us identify $Z_\Ocal(\kappa(\qfrak))$ with the set of closed points $x\in Z_\Ocal$ with residue field $\kappa(x)$ isomorphic to $\kappa(\qfrak)=k/\qfrak$. Note that $Z_\Ocal(\kappa(\qfrak))$ is an effective subset of $Z_\Ocal$ for any choice of $\qfrak$, since $Z_\Ocal(\kappa(\qfrak))$ meets all irreducible components of $Z_\Ocal$. We claim that the cardinality of the finite set $Z_\Ocal(A_{Z_\Ocal(\kappa(\qfrak))}/J_{Z_\Ocal(\kappa(\qfrak))}^2)$ can be bounded independently of $\Ocal$. First, note that
$$Z_\Ocal(A_{Z_\Ocal(\kappa(\qfrak))}/J_{Z_\Ocal(\kappa(\qfrak))}^2)\subseteq V(A_{Z_\Ocal(\kappa(\qfrak))}/J_{Z_\Ocal(\kappa(\qfrak))}^2).$$
Observe next that the coefficient ring $A_{Z_\Ocal(\kappa(\qfrak))}/J_{Z_\Ocal(\kappa(\qfrak))}^2$ is a quotient of the finite ring $A_{V(\kappa(\qfrak))}/J_{V(\kappa(\qfrak))}^2$, where $V(\kappa(\qfrak))$ is the finite set of closed points in $V$ with residue field isomorphic to $\kappa(\qfrak)$, and where we denote
$A_{V(\kappa(\qfrak))}=\prod_{x\in V(\kappa(\qfrak))}\Ocal_{V,x}$ and $J_{V(\kappa(\qfrak))}=\prod_{x\in V(\kappa(\qfrak))}\mfrak_{V,x}$. Thus, we conclude that
$$\textstyle|Z_\Ocal(A_{Z_\Ocal(\kappa(\qfrak))}/J_{Z_\Ocal(\kappa(\qfrak))}^2)|\leq \max_{R\in T(\qfrak)}|V(R)|$$
where $T(\qfrak)$ denotes the set of all quotients of $A_{V(\kappa(\qfrak))}/J_{V(\kappa(\qfrak))}^2$. This establishes the claim.

Let us fix now two maximal primes $\pfrak,\pfrak'$ of $k$ with coprime residue characteristic, independently of $\Ocal$. By Proposition \ref{prop1}, the kernel of
$$\Aut(Z_\Ocal/k)\to\Aut Z_\Ocal(A_{Z_\Ocal(\kappa(\pfrak))}/J_{Z_\Ocal(\kappa(\pfrak))}^2)\times\Aut Z_\Ocal(A_{Z_\Ocal(\kappa(\pfrak'))}/J_{Z_\Ocal(\kappa(\pfrak'))}^2)$$
must be torsion-free, and hence trivial since $\Aut(Z_\Ocal/k)$ is finite. By the previous paragraph, we obtain the bound
$|\Aut(Z_\Ocal/k)|\leq C(V,k)$
with
$$\textstyle C(V,k)=(\max_{R\in T(\pfrak)}|V(R)|!)(\max_{R\in T(\pfrak')}|V(R)|!)$$
which depends only on $V/k$ and the choices of $\pfrak,\pfrak'$, as desired.
\end{proof}

\begin{thm}{\ref{mainthm}}
    Let $V$ be a separated scheme flat of finite presentation over a finitely presented domain $k$ of characteristic zero. There is an effective universal constant $C(V,k)$ such that, for any subscheme $X$ of $V$, any set $S$ of endomorphisms of $X/k$, and any $S$-periodic point $x\in X(k)$, we have
    $$|\Ocal_S(x)|\leq C(V,k).$$
\end{thm}

\begin{proof}
Suppose that $X$ is a subscheme of $V$, $S$ is a set of endomorphisms of $X/k$, and $x\in X(k)$ is an $S$-periodic point. Let $\Ocal=\Ocal_S(x)$. Since an immersion is a monomorphism in the category of schemes, the natural map $X(k)\to V(k)$ is injective, and we shall also write $\Ocal$ for the image of $\Ocal_S(x)$ in $V$. Let $Z_{\Ocal}$ (resp. $Z_\Ocal'$) denote the reduced closed subscheme of $V$ (resp.~$X$) defined by $\bigcup_{y\in\Ocal}y(\Spec k)$. Then the immersion $X\to V$ induces an isomorphism $Z_\Ocal'\simeq Z_\Ocal$. Then by Theorem \ref{mainthm'} we have
$$|\Ocal_S(x)|\leq|\Aut(Z'_{\Ocal}/k)|=|\Aut(Z_{\Ocal}/k)|\leq C(V,k)$$
where $C(V,k)$ is a constant determined solely by $(V,k)$, noting that each member of $\langle S\rangle$ induces an automorphism of the scheme $Z_{\Ocal}'/k$. Alternatively, we can use the observation that, with the choice of formula for $C(V,k)$ given in the proof of Theorem \ref{mainthm'}, we have $C(X,k)\leq C(V,k)$.
\end{proof}

\subsection{Decidability of periodicity}\label{sect:2.3}
We now prove Theorem \ref{mainthm3}.

\begin{thm}{\ref{mainthm3}}
Let $S$ be a finite set of endomorphisms of an algebraic variety $V/\bar\Q$. There is an algorithm to decide, given $x\in V(\bar\Q)$, whether or not $x$ is $S$-periodic.
\end{thm}

\begin{proof}
Let $V$ be an algebraic variety over $\bar\Q$. Let $S$ be a finite set of endomorphisms of $V/\bar\Q$. Let $x\in V(\bar\Q)$. We can effectively determine, by spreading, a finitely generated subring $k$ of $\bar\Q$ such that $(V,S)$ descends to a flat model over $k$ and $x$ descends to a point of $V(k)$. By Theorem \ref{mainthm}, there is an effective integer constant $C(V,k)\geq1$ such that, if $x$ is $S$-periodic, then $|\Ocal_S(x)|\leq C(V,k)$. Consider the ascending chain of finite sets $O_1\subseteq O_2\subseteq\cdots$ where $O_1=\{x\}$ and $$O_{n+1}=O_n\cup\{f(y):y\in O_n,f\in S\}$$ for $n\geq1$. Note that $\Ocal_S(x)=\bigcup_{n\geq1}O_n$, and either:
\begin{enumerate}
    \item $O_{n+1}=O_n$ for some $n\leq C(V,k)$ in which case $O_n=\Ocal_S(x)$ is finite and $S$-periodicity of $x$ can be decided by testing each $f\in S$ on $\Ocal_S(x)$, or
    \item $|\Ocal_S(x)|\geq |O_{C(V,k)}|>C(V,k)$ and hence $x$ is not $S$-periodic.
\end{enumerate} This demonstrates that $S$-periodicity of $x$ is decidable.
\end{proof}

\subsection{Orbits on affine spaces} \label{sect:2.4}
We now prove a quantitative form of Theorem \ref{affine}.

\begin{theorem}
Fix $n\geq1$. There is an effective universal constant $C(n)$ such that, for an arbitrary set $S$ of polynomial maps from $\Z^n$ to itself with integer coefficients, and any $S$-periodic point $x\in\Z^n$, we have
$|\Ocal_S(x)|\leq C(n)$. In fact, we may take $$C(n)=(2^{n(n+2)2^n}!)(3^{n(n+2)3^n}!).$$
In particular, there is a polynomial-time algorithm to decide, for $x\in\Z^n$ and a finite set $S$ of polynomial endomorphisms of $\Z^n$ over $\Z$, whether or not $x$ is $S$-periodic.
\end{theorem}

\begin{proof}
The first part is a special case of Theorem \ref{mainthm}. Let $B(n)$ denote the optimal constant. By the proof of Theorem \ref{mainthm}, we have $$B(n)\leq\textstyle (\max_{R\in T(2)}|\A^n(R)|!)(\max_{R\in T(3)}|\A^n(R)|!)$$
where $T(p)$ for each prime $p$ denotes the set of all quotients of the finite ring
$$A_p=\prod_{(a_1,\dots,a_n)\in \{0,\dots,p-1\}^n}\frac{\Z[x_1,\dots,x_n]}{(p,x_1-a_1,\dots,x_n-a_n)^2}.$$
Each term in the above product has cardinality $p^{n+2}$, and there are $p^n$ terms so $|A_p|=p^{(n+2)p^n}$. Each quotient ring $R$ of $A_p$ has cardinality at most $p^{(n+2)p^n}$, so
$$\max_{R\in T(p)}|\A^n(R)|=p^{n(n+2)p^n}.$$
Therefore, $B(n)\leq (2^{n(n+2)2^n}!)(3^{n(n+2)3^n}!)$ as desired. The algorithm for deciding periodicity, described in the proof of Theorem \ref{mainthm3}, involves only finitely many instances of multiplication and addition, with the total number of operations needed being bounded by a polynomial function in the number of operations appearing in $S$. Hence, the algorithm is of polynomial time.
\end{proof}

\end{document}